\newtheorem{theorem}{\rm\bf Theorem}[section]
\newtheorem{proposition}[theorem]{\rm\bf Proposition}
\newtheorem{lemma}[theorem]{\rm\bf Lemma}
\newtheorem{corollary}[theorem]{\rm\bf Corollary}
\newtheorem*{theorem 1}{\rm\bf Proposition 1}
\newtheorem*{theorem 2}{\rm\bf Proposition 2}
\theoremstyle{definition}
\theoremstyle{remark}
\newtheorem{remark}[theorem]{\rm\bf Remark}
\newtheorem{example}[theorem]{\rm\bf Example}
\def\scal#1#2{\langle #1, #2\rangle}
\def\R#1{\mathbb{R}^{#1}}
\def\TOP{\mathrm{t}}
\DeclareMathOperator{\trace}{\mathrm{tr}}
\DeclareMathOperator{\re}{\mathrm{Re}}
\begin{document}

%
\title{\uppercase{Non-isoparametric solutions of the eikonal equation}} 
\author{Vladimir Tkachev }

\address{Royal Institute of Technology, S-10044, Stockholm, Sweden}

 \email{tkatchev@kth.se}

\maketitle

%

\begin{abstract}
In this paper, we prove that a quartic solution of the eikonal equation $|\nabla_x f|^2=16x^{6}$ in $\R{n}$ is either isoparametric or congruent to a polynomial $f=(\sum_{i=1}^n x_i^2)^2-8(\sum_{i=1}^k x_i^2)(\sum_{i=k+1}^{n} x_i^2)$, $k=0,1,\ldots,[\frac{n}{2}]$.
\end{abstract}

\section{Introduction} 

Let $V= \R{n}$ be a Euclidean space equipped with the standard scalar product $\scal{\cdot}{\cdot}$. By $O(V)$ we denote the orthogonal group in $V$ and be $S(V)$ we denote the standard unit sphere in $V$ centered at the origin.
Recall that a hypersurface $M$ in $S(V)$ is called \textit{isoparametric} if it has constant principal curvatures \cite{Thorb}; see also a recent survey \cite{Cecil}. By a celebrated theorem of M\"{u}nzner \cite{Mun1}, any isoparametric hypersurface is  algebraic and its defining polynomial $f$ is homogeneous of degree $g=1,2,3,4$ or $6$, where $g$ is the number of distinct principal curvatures. Moreover, suitably normalized, $f$  satisfies the system of the so-called M\"{u}nzner-Cartan differential equations
\begin{equation}\label{Muntzer1}
|\nabla_x f|^2=g^2x^{2g-2}, \qquad \Delta_x f=\frac{m_2-m_1}{2}\,g^2x^{g-2},
\end{equation}
where $m_i$ are the multiplicities of the maximal and minimal principal curvature of $M$ (there holds $m_1=m_2$ when $g$ is odd). Here and in what follows, if no ambiguity is possible, we omit the norm notation by writing $x^{k}$ for $|x|^{k}$.

Isoparametric hypersurfaces of lower degrees $g=1,2,3$ were completely classified by \'{E}lie~ Cartan  in the late 1930-s. In the case of three distinct curvatures  one has $m_1=m_2$, so that any isoparametric cubic $f$ is a priori harmonic. In \cite{Cartan} Cartan established a remarkable result that there exist exactly four different isoparametric hypersurfaces with three distinct principal curvatures: their dimensions are equal to $3d$, where $d=1,2,4,8$, and the corresponding defining polynomials $f$ can be naturally expressed in terms of the multiplication in one of four real division algebras $\mathbb{F}_d$ of dimension $d$, where $\mathbb{F}_1=\mathbb{R}$ (reals), $\mathbb{F}_2=\mathbb{C}$ (complexes), $\mathbb{F}_4=\mathbb{H}$ (quaternions) and $\mathbb{F}_8=\mathbb{O}$ (octonions). The class isoparametric hypersurfaces with $g=4$ is very well understood by now, thanks to the works of
Ozeki and Takeuchi \cite{OT1}, \cite{OT2}, Ferus, Karcher, and M\"unzner \cite{FKM}, Cecil, Chi and Jensen \cite{CCJ} and several other authors.
However, in spite of much recent progress, isoparametric hypersurfaces with $g=4$ and $g=6$ distinct principal curvatures are not yet completely classified.

Regarding the M\"{u}nzner-Cartan differential equations as a system of two differential relations, a very natural question appears: How to characterize polynomial solutions of the first equation in (\ref{Muntzer1}) alone?
We shall call a homogeneous polynomial $f$ satisfying
\begin{equation}\label{Muntzer3}
|\nabla_x f|^2=g^2x^{2g-2}, \qquad \deg f=g, \quad x\in \R{n},
\end{equation}
an \textit{eikonal} polynomial.
This problem arises, for example, in geometrical optics and wave
propagation \cite{Rose}, \cite{Belousov}, the theory of  harmonic morphisms \cite{Baird}, entire solutions of the eikonal equation \cite{LiBQ}, transnormal hypersurfaces \cite{Robertson}, \cite{Bolton}.
Note also that any eikonal polynomial induces  a polynomial map $\mathrm{grad}_x f:S(\R{n})\to S(\R{n})$. Polynomial maps between Euclidean spheres, in particular those with harmonic coordinates (the so-called eigenmaps), has been the subject of considerable recent interest, with applications to the general problem of representing homotopy classes by polynomial maps   (see  \cite{Wood}, \cite{Baird} for the further discussion). Remarkably, for $n=4$ any  nonconstant quadratic eigenmap $F\colon S(\R{4})\to S(\R{4})$  up to isometries of the domain and the range, is the gradient of a Cartan cubic isoparametric polynomial \cite{Eigen}.

The characterization problem of eikonal polynomials, except for the trivial cases $g=1,2$ remains essentially open. Only recently, for $g=3$  the following complete description of  eikonal cubic polynomials was obtained in \cite{TkCartan}. We have proved that any eikonal polynomial for $g=3$ is congruent to either one of the four isoparametric Cartan cubic polynomials in dimensions $n=5,8,14$ and $26$, or to the polynomial
\begin{equation}\label{cubic}
h=x_n^3-3x_n(x_1^2+\ldots+x_{n-1}^2).
\end{equation}
Recall that two eikonal polynomials $f_1$ and $f_2$ are called {congruent} if there is an orthogonal transformation $U\in O(V)$ such that $f_2(x)=\pm f_1(Ux)$. Observe that $\Delta h=3(2-n)x_n$, thus $h$ is non-isoparametric for $n\ne 2$. Our proof is heavily based on one result of Yiu \cite{Yiu} on quadratic maps between Euclidean spheres and the theory of composition formulas \cite{Shapiro}.

In this paper, we  obtain a similar result for the eikonal quartics, that is the solutions of (\ref{Muntzer3}) with $g=4$. Before formulating this result we make some preliminary definitions and observations. The above family (\ref{cubic}) has a natural generalization for any degree $g$. Namely, let us  associate with an arbitrary  subspace $H$ of $V$ and an integer $g\ge 1$ the following function:
\begin{equation}\label{virtue}
h_{g,H}= \re (|\xi|+ |\eta|\sqrt{-1})^g=\sum_{k=0}^{[g/2]}(-1)^k\binom{g}{k}\xi^{g-2k} \eta^{2k},
\end{equation}
where $\xi$ and $\eta$ are the orthogonal projections of $x$ onto $H$ and $H^\bot=V\ominus H$ respectively. Then the cubic $h$ in (\ref{cubic}) is exactly $h_{3,H}$ with $H=\R{}e_n$. In general, one can easily verify that the following is true.

\begin{proposition}\label{pr:eik}
Suppose that either $g$ is even or $g$ is odd but $\dim H_1=1$. Then $h_{g,\mathcal{H}}$ is eikonal polynomial of degree $g$.
\end{proposition}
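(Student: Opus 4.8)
The plan is to reduce the eikonal identity $|\nabla_x h_{g,H}|^2 = g^2 x^{2g-2}$ for the polynomial in (\ref{virtue}) to the elementary identity $\Phi_a^2+\Phi_b^2 = g^2(a^2+b^2)^{g-1}$ for the real part $\Phi(a,b) := \re\zeta^g$ of a power of $\zeta = a+b\I$. Write $x = \xi + \eta$ with $\xi\in H$, $\eta\in H^\bot$. When $g$ is even, put $a = |\xi|$; when $g$ is odd and $\dim H = 1$, fix a unit vector $e$ spanning $H$ and put $a = \scal{x}{e}$ (this is the signed reading of ``$|\xi|$'' in (\ref{virtue}) already forced by the cubic (\ref{cubic})). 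Put $b = |\eta|$ and $\zeta = a + b\I$, and regard $\Phi(a,b) = \re\zeta^g$ as a homogeneous polynomial of degree $g$ in the two real variables. The first step is to record that $h_{g,H} = \Phi(a,b)$ is a genuine homogeneous polynomial in $x$ of degree exactly $g$: in the even case each monomial $|\xi|^{g-2k}|\eta|^{2k} = (|\xi|^2)^{(g-2k)/2}(|\eta|^2)^{k}$ is polynomial; in the odd case only odd powers of $\scal{x}{e}$ and even powers of $|\eta|$ occur; and restricting $x$ to $H$ reduces $h_{g,H}$ to $a^g\not\equiv 0$.

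The crux is the observation that, on the complement of the nowhere-dense set $Z := \{\xi=0\}\cup\{\eta=0\}$ (assuming $0\ne H\ne V$; the two extreme cases $H=\{0\}$, $H=V$ are immediate), the pair $(\nabla a,\nabla b)$ is orthonormal. Indeed $\nabla|\eta| = \eta/|\eta|$ and, in the even case, $\nabla|\xi| = \xi/|\xi|$, while in the odd case $\nabla\scal{x}{e} = e$; each of these is a unit vector, and $\scal{\nabla a}{\nabla b}=0$ because $\xi\perp\eta$ (resp. $e\perp\eta$). Hence $\nabla_x h_{g,H} = \Phi_a\,\nabla a + \Phi_b\,\nabla b$ and $|\nabla_x h_{g,H}|^2 = \Phi_a^2 + \Phi_b^2$ off $Z$.

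It then remains to compute $\Phi_a^2+\Phi_b^2$. From $\partial_a\zeta^g = g\zeta^{g-1}$ and $\partial_b\zeta^g = g\I\,\zeta^{g-1}$ one gets $\Phi_a = g\,\re\zeta^{g-1}$ and $\Phi_b = -g\,\mathrm{Im}\,\zeta^{g-1}$, so $\Phi_a^2+\Phi_b^2 = g^2|\zeta^{g-1}|^2 = g^2(a^2+b^2)^{g-1}$. Since $a^2+b^2 = |\xi|^2+|\eta|^2 = |x|^2$ (resp. $\scal{x}{e}^2+|\eta|^2=|x|^2$), this gives $|\nabla_x h_{g,H}|^2 = g^2 x^{2g-2}$ on $\R{n}\setminus Z$; as both sides are polynomials in $x$ and $Z$ is nowhere dense, the identity holds identically, and $h_{g,H}$ is eikonal. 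There is no serious obstacle here: the only delicate point is the non-smoothness of $a$ and $b$ along $Z$, dealt with by the density remark, and this is exactly why for odd $g$ one restricts to $\dim H = 1$ — only then is $h_{g,H}$ a polynomial so that the argument applies at all. (For even $g$ one may bypass the density argument entirely by using the smooth polynomials $|\xi|^2,|\eta|^2$ in place of $a,b$, since $\nabla|\xi|^2 = 2\xi$ and $\nabla|\eta|^2 = 2\eta$ are globally defined and orthogonal.)
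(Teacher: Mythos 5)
Your proof is correct. The paper states this proposition without proof (``one can easily verify''), and your argument --- writing $h_{g,H}=\re\zeta^g$ with $\zeta=a+b\I$, computing $\Phi_a^2+\Phi_b^2=g^2|\zeta|^{2(g-1)}$ from $\partial_a\zeta^g=g\zeta^{g-1}$, and observing that $(\nabla a,\nabla b)$ is an orthonormal pair off the nowhere-dense set $Z$ --- is precisely the natural verification suggested by the definition (\ref{virtue}). You also handle correctly the two points the paper glosses over: that for odd $g$ one must read $\xi^{g-2k}$ as a signed power $\scal{x}{e}^{g-2k}$ (which is why $\dim H=1$ is needed for $h_{g,H}$ to be a polynomial at all, consistent with the cubic (\ref{cubic})), and that the non-smoothness of $|\xi|,|\eta|$ along $Z$ is harmless because both sides of the eikonal identity are polynomials.
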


We  call a polynomial congruent to  $(\ref{virtue})$ a \textit{primitive} eikonal polynomial.

A typical \textit{quartic} primitive polynomial in the above notation reads as follows:
\begin{equation*}\label{virtue4}
h_{4,H}= \xi^4-6\xi^2\eta^2+\eta^4.
\end{equation*}

\begin{proposition}\label{pr:new}
Two primitive quartics $h_{4,H_1}$ and $h_{4,H_2}$ in $\R{n}$ are congruent if and only if either $\dim H_1=\dim H_2$ or $\dim H_1=n-\dim H_2$.
\end{proposition}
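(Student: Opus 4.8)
The plan is to base everything on the identity
\[
h_{4,H}(x)=\xi^{4}-6\xi^{2}\eta^{2}+\eta^{4}=|x|^{4}-8\,|\xi|^{2}|\eta|^{2},
\]
which follows from $(\xi^{2}+\eta^{2})^{2}-8\xi^{2}\eta^{2}=\xi^{4}-6\xi^{2}\eta^{2}+\eta^{4}$ and $|\xi|^{2}+|\eta|^{2}=|x|^{2}$; here $|\xi|^{2}=\scal{P_{H}x}{x}$ and $|\eta|^{2}=\scal{P_{H^{\perp}}x}{x}$, with $P_{H}$ the orthogonal projection. Two things are visible at once: $h_{4,H}$ is symmetric under $\xi\leftrightarrow\eta$, so $h_{4,H}=h_{4,H^{\perp}}$; and $h_{4,H}$ depends on $H$ only through the projections $P_{H},P_{H^{\perp}}$. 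The ``if'' direction is then immediate. If $\dim H_{1}=\dim H_{2}$, take $U\in O(V)$ with $U(H_{1})=H_{2}$ (hence $U(H_{1}^{\perp})=H_{2}^{\perp}$); the projection description gives $h_{4,H_{2}}(Ux)=h_{4,H_{1}}(x)$. If $\dim H_{1}=n-\dim H_{2}=\dim H_{2}^{\perp}$, apply the previous case to the pair $H_{1},H_{2}^{\perp}$ and use $h_{4,H_{2}^{\perp}}=h_{4,H_{2}}$.

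For the ``only if'' direction, suppose $h_{4,H_{2}}(x)=\varepsilon\,h_{4,H_{1}}(Ux)$ with $\varepsilon\in\{+1,-1\}$ and $U\in O(V)$, and put $k_{i}=\dim H_{i}$. The idea is to recover $k_{i}$, up to the replacement $k_{i}\mapsto n-k_{i}$, from a congruence invariant. A direct computation from the identity above --- using $\Delta|x|^{2}=2n$, $\Delta|\xi|^{2}=2k$, $\Delta|\eta|^{2}=2(n-k)$ and $\Delta(|\xi|^{2}|\eta|^{2})=2k|\eta|^{2}+2(n-k)|\xi|^{2}$ --- shows that $\Delta^{2}h_{4,H}$ equals the \emph{constant} $8n(n+2)-64\,k(n-k)$. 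Since $\Delta$ commutes with the orthogonal substitution $x\mapsto Ux$, equating the two constants gives $8n(n+2)-64\,k_{2}(n-k_{2})=\varepsilon\bigl(8n(n+2)-64\,k_{1}(n-k_{1})\bigr)$. When $\varepsilon=+1$ this reads $k_{1}(n-k_{1})=k_{2}(n-k_{2})$, equivalently $(k_{1}-k_{2})(k_{1}+k_{2}-n)=0$, so $k_{1}=k_{2}$ or $k_{1}=n-k_{2}$, which is exactly the assertion.

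The case $\varepsilon=-1$ is the one point requiring an extra argument, since $\Delta^{2}h_{4,H}$ alone does not exclude it. Here I would argue geometrically. For $x\in U^{-1}(H_{1})$ we have $Ux\in H_{1}$, hence $h_{4,H_{1}}(Ux)=|x|^{4}$ and therefore $h_{4,H_{2}}(x)=-|x|^{4}$. On the other hand, writing $\xi_{2}=P_{H_{2}}x$ and $\eta_{2}=P_{H_{2}^{\perp}}x$, the inequality $|\xi_{2}|^{2}|\eta_{2}|^{2}\le\frac14|x|^{4}$ (AM--GM) gives $h_{4,H_{2}}(x)=|x|^{4}-8|\xi_{2}|^{2}|\eta_{2}|^{2}\ge-|x|^{4}$ for all $x$, with equality only where $|\xi_{2}|^{2}=|\eta_{2}|^{2}$. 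Hence the quadratic form $|\xi_{2}|^{2}-|\eta_{2}|^{2}$, whose signature is $(k_{2},n-k_{2})$, vanishes identically on $U^{-1}(H_{1})$, and --- by the same argument applied to $H_{1}^{\perp}$ --- on $U^{-1}(H_{1}^{\perp})$ as well; so both are totally isotropic subspaces for it. Consequently $k_{1}\le\min(k_{2},n-k_{2})$ and $n-k_{1}\le\min(k_{2},n-k_{2})$, and adding these together with $\min(k_{2},n-k_{2})\le n/2$ forces equality throughout, so $k_{2}=n/2$ and $k_{1}=n/2$; in particular $\dim H_{1}=\dim H_{2}$. The only real obstacle is this $\varepsilon=-1$ case; once the displayed identity is in hand everything else is formal.
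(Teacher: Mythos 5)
Your proof is correct, and it diverges from the paper's in an instructive way. The paper also exploits the $O(n)$-invariance of the Laplacian, but it uses the full \emph{quadratic form} $\Delta_x h_{4,H_i}=\lambda_i\xi_i^2+\mu_i\eta_i^2$ and compares spectra: for $\epsilon=+1$ matching the eigenvalue multiplicities gives $p_1=p_2$ or $p_1=n-p_2$, and for $\epsilon=-1$ the identity $\lambda_i+\mu_i=16-8n$ forces $n=2$ in one line. You instead use the weaker scalar invariant $\Delta^2 h_{4,H}=8n(n+2)-64k(n-k)$ (which is just twice the trace of the paper's quadratic form); this still suffices for $\varepsilon=+1$ via the factorization $(k_1-k_2)(k_1+k_2-n)=0$, but it genuinely cannot dispatch $\varepsilon=-1$ (e.g.\ $n=4$, $k_1=k_2=1$ satisfies the trace identity with $\varepsilon=-1$), so you are right that an extra argument is needed there. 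Your replacement --- the sharp lower bound $h_{4,H_2}\ge -|x|^4$ with equality locus $\{|\xi_2|=|\eta_2|\}$, forcing $U^{-1}(H_1)$ and $U^{-1}(H_1^\perp)$ to be totally isotropic for the signature-$(k_2,n-k_2)$ form and hence $k_1=k_2=n/2$ --- is valid and in fact yields more precise information than the paper's trace argument (which only concludes $n=2$ and calls the rest trivial). The trade-off: the paper's route is shorter and stays entirely inside linear algebra of the Laplacian, while yours isolates a clean geometric invariant (the extremal set of the quartic) that identifies exactly when the sign flip can occur. You also supply the ``if'' direction explicitly, which the paper omits as obvious; your identity $h_{4,H}=|x|^4-8|\xi|^2|\eta|^2$ makes that direction, and the self-duality $h_{4,H}=h_{4,H^\perp}$, transparent.
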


\begin{proof}
It suffices to prove the `only if' part. Suppose that $h_{4,H_1}$ and $h_{4,H_2}$ are congruent. Then there exists an orthogonal transformation $U\in O(V)$ such that $h_{4,H_2}(x)=\epsilon h_{4,H_1}(Ux)$, $\epsilon^2=1$. Since the Laplacian is an invariant operator, the quadratic forms $\Delta_x h_{4,H_1}$ and $\epsilon\Delta_x h_{4,H_2}$ have the same spectrum. We have
$$
\Delta_x h_{4,H_i}=(8+16p_i-12n)\xi_i^2+(4n-16p_i+8)\eta_i^2\equiv \lambda_i\xi_i^2+\mu_i\eta_i^2,
$$
where $\xi_i$ and $\eta_i$ are the projections on $H_i$ and $H_i^\bot$ respectively, and $p_i=\dim H_i$, $i=1,2$. Note that the quadratic forms are diagonal. If $\epsilon=1$ then a simple analysis implies that either $p_1=p_2$ or $p_1=n-p_2$. If $\epsilon=-1$ we have $\lambda_1+\mu_1=-\lambda_2-\mu_2$. On the other hand, $\lambda_i+\mu_i=16-8n$, hence $n=2$ and in this case the conclusion of the proposition is trivial.
\end{proof}

It follows from Proposition~\ref{pr:new} that there is exactly $[\frac{n}{2}]$ distinct congruence classes of primitive quartics in $\R{n}$. Any such a quartic is congruent to one of the following:
$$
f=(\sum_{i=1}^n x_i^2)^2-8(\sum_{i=1}^k x_i^2)(\sum_{i=k+1}^{n} x_i^2), \qquad 0\le k\le [\frac{n}{2}].
$$

The main result of the present paper is the following theorem.

\begin{theorem}
\label{th:polar}
Any quartic eikonal polynomial is either isoparametric or primitive.
\end{theorem}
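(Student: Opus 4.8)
The plan is to use the eikonal equation together with its first two derivatives to put $f$ into a normal form at a point where $f|_{S(V)}$ is maximal, and then to reduce the classification to a system of polynomial identities admitting exactly two families of solutions. Replacing $f$ by $-f$ if necessary, normalize so that $\max_{S(V)}f=1$, attained at a unit vector $a$. Since $f$ is homogeneous of degree $4$, Euler's identity gives $\langle\nabla f(a),a\rangle=4$ and $\nabla^{2}f(a)\,a=3\nabla f(a)$; together with the Lagrange condition $\nabla f(a)=\lambda a$ and $|\nabla f(a)|^{2}=16$ this forces $\nabla f(a)=4a$ and $\nabla^{2}f(a)\,a=12a$, so the symmetric operator $\nabla^{2}f(a)$ preserves $a^{\bot}$. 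Restricting $|\nabla f|^{2}=16|x|^{6}$ to the great circle $t\mapsto\cos t\,a+\sin t\,v$ with $v\in a^{\bot}$ a unit vector and differentiating twice at $t=0$ (using $\sum_{k}f_{ijk}x_{k}=2f_{ij}$) one obtains $|\nabla^{2}f(a)v|^{2}=48-8\langle\nabla^{2}f(a)v,v\rangle$; applied to an eigenvector this yields $\mu^{2}+8\mu-48=0$, so $\mu\in\{4,-12\}$. Hence $a^{\bot}=E_{+}\oplus E_{-}$, the eigenspaces of $\nabla^{2}f(a)|_{a^{\bot}}$ for $4$ and $-12$.

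Choose orthonormal coordinates $x=(x',x'',s)$ adapted to $V=E_{+}\oplus E_{-}\oplus\mathbb{R}a$ and write $f=\sum_{j=0}^{4}s^{4-j}P_{j}(x',x'')$ with $\deg P_{j}=j$. The values of $f$, $\nabla f$, $\nabla^{2}f$ at $a$ pin down $P_{0}=1$, $P_{1}=0$, $P_{2}=2|x'|^{2}-6|x''|^{2}$, so $f=s^{4}+s^{2}(2|x'|^{2}-6|x''|^{2})+sP_{3}+P_{4}$. Substituting into $|\nabla f|^{2}=16(|x'|^{2}+|x''|^{2}+s^{2})^{3}$ and comparing powers of $s$, the coefficient of $s^{3}$ is $4P_{3}+\langle Ly,\nabla_{y}P_{3}\rangle=0$, with $y=(x',x'')$ and $L=\mathrm{diag}(4\,\mathrm{Id}_{E_{+}},-12\,\mathrm{Id}_{E_{-}})$; decomposing $P_{3}$ according to its degree in $x'$ and in $x''$, an Euler-type argument kills every bidegree except $(2,1)$, so $P_{3}=\sum_{\beta}x''_{\beta}q_{\beta}(x')$ with $q_{\beta}$ quadratic forms on $E_{+}$. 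Writing $P_{4}^{(j)}$ for the bidegree-$(j,4-j)$ part of $P_{4}$, the coefficients of $s^{2},s^{1},s^{0}$, split by bidegree, determine $P_{4}$ up to $P_{4}^{(3)}$: one gets $P_{4}^{(0)}=|x''|^{4}$, $P_{4}^{(1)}=0$, $P_{4}^{(2)}=\tfrac1{32}|\nabla_{x'}P_{3}|^{2}-6|x'|^{2}|x''|^{2}$, $P_{4}^{(4)}=|x'|^{4}-\tfrac1{32}\sum_{\beta}q_{\beta}^{2}$, and residual identities relating $P_{4}^{(3)}$, the $q_{\beta}$ and $|x''|^{2}$.

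Now the proof splits. If $P_{3}\equiv0$, the residual identity collapses to $|\nabla_{x'}P_{4}^{(3)}|^{2}+|\nabla_{x''}P_{4}^{(3)}|^{2}=64|x''|^{2}P_{4}^{(3)}$; the left side is a nonnegative form and the right side is odd in $x''$, forcing $P_{4}^{(3)}\equiv0$, and then $f=(s^{2}+|x'|^{2})^{2}-6|x''|^{2}(s^{2}+|x'|^{2})+|x''|^{4}=h_{4,H}$ with $H=E_{+}\oplus\mathbb{R}a$, so $f$ is primitive. If $P_{3}\not\equiv0$, the goal is to prove $\Delta f=c|x|^{2}$, i.e. that $f$ also satisfies the second M\"unzner--Cartan equation and is isoparametric. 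With $P_{4}$ as above, $\Delta f$ can fail to be a multiple of $|x|^{2}$ only through the term $s\sum_{\beta}(\trace\,\mathrm{Hess}\,q_{\beta})x''_{\beta}$, the term $\Delta_{x'}P_{4}^{(3)}$, and the way the bidegree-$(2,0)$ and $(0,2)$ parts of $\Delta_{y}P_{4}$ combine with $4|x'|^{2}-12|x''|^{2}$; one must show the residual eikonal identities make each of these harmless, i.e. that each $q_{\beta}$ is traceless, $P_{4}^{(3)}$ is $x'$-harmonic, and the symmetric operators $C_{\beta}$ with $q_{\beta}(x')=\langle C_{\beta}x',x'\rangle$ satisfy the relations of a Clifford system.

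The main obstacle is precisely this last point: extracting, from the bidegree components of the $s^{1}$- and $s^{0}$-coefficients not already used to solve for $P_{4}$, the exact algebraic relations among the $C_{\beta}$ and $P_{4}^{(3)}$, and recognizing them as Clifford/Hurwitz--Radon relations, so that $\Delta f$ becomes a constant multiple of $|x|^{2}$ and $f$ is isoparametric. This is the quartic counterpart of the appeal to Yiu's theorem on quadratic maps of spheres and to composition formulas in the cubic case \cite{TkCartan}; the bookkeeping over the bidegrees $(6,0),(4,2),(2,4),(0,6)$ arising in the $s^{0}$-equation and $(4,1),(2,3)$ in the $s^{1}$-equation, together with their cross-terms, carries the bulk of the technical work.
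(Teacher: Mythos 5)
Your setup reproduces, up to notation, the paper's Section~2: the normal form at a maximum of $f|_{S(V)}$, the Hessian eigenvalues $4$ and $-12$ giving $a^{\bot}=E_+\oplus E_-$, the bidegree $(2,1)$ of $P_3$, and the formulas for $P_4^{(0)},P_4^{(1)},P_4^{(2)},P_4^{(4)}$ all match (your $P_3$, $P_4^{(3)}$, $q_\beta$ are $8\psi$, $\theta_3$, $8\,\xi^\TOP A_\beta\xi$ in the paper's notation). But the argument stops exactly where the theorem's proof begins: the whole branch $P_3\not\equiv 0$ is reduced to ``one must show'' three facts which are then deferred as ``the main obstacle.'' That branch \emph{is} the theorem, so what you have is a correct reduction, not a proof. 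Moreover the dichotomy is wrong as stated: the case $\dim E_-=1$ with $A_1\ne 0$ produces a quartic with $P_3\not\equiv 0$ that is nevertheless \emph{primitive} (after an orthogonal change of variables mixing $s$ with the $E_-$ coordinate), so one cannot aim to prove isoparametricity throughout the branch $P_3\not\equiv 0$; the low-dimensional cases $\dim E_-\le 1$ must be split off and handled separately, as in Proposition~\ref{pr1}.

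Concretely, three missing steps. First, tracelessness of the $q_\beta$ is not a residual identity: it comes from a connectedness argument on the unit sphere of $E_-$ (the eigenvalue multiplicities of $\frac{1}{|\eta|}A_\eta$ are continuous integer-valued functions, hence constant, and $\trace A_\eta$ cannot be proportional to the non-linear function $|\eta|$), which requires $\dim E_-\ge 2$. Second, harmonicity of $P_4^{(3)}$ requires decomposing it along the eigenspace splitting $L^+\oplus L^-\oplus L^0$ of a single $A_1$ and combining two of the residual identities to force each coefficient cubic to be trilinear. Third, and decisively: even granting tracelessness, harmonicity, and the Clifford-type relation $A_\eta^3=\eta^2A_\eta$ (which does come cheaply from the $s^1$-coefficient), $\Delta f$ is \emph{still not} a multiple of $|x|^2$. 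One gets $\Delta_x f=4(p-3q+3)(x_n^2+\xi^2)+4(8\nu-1+q-3p)\eta^2$ with $p=\dim E_+$, $q=\dim E_-$, $\nu$ the rank parameter of the $A_i$, and proportionality to $|x|^2$ is equivalent to the arithmetic constraint $2\nu=p-q+1$. This constraint is the heart of the paper's proof: it is extracted by computing $\Delta_\eta\Delta_\xi\bigl(\theta_3^2\bigr)$ in two different orders and comparing, which yields $(p+1-q-2\nu)\,\tau^2=0$ with $\tau=(\xi^\TOP A_1\xi,\ldots,\xi^\TOP A_q\xi)$, and then observing that $\tau\equiv 0$ throws one back into the primitive case. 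Nothing in your outline produces this relation; ``recognizing the $C_\beta$ as a Clifford system'' does not substitute for it, since the Clifford identities alone leave the multiplicities $\nu,p,q$ unconstrained.
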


The proof of Theorem~\ref{th:polar} will be given in the remaining sections of the paper. We conclude this Introduction by observing that, in view of the remarks made above, the following conjecture seems to be plausible.

\medskip
\textbf{Conjecture. }
An  eikonal polynomial of an arbitrary degree $g\ge 2$ is either isoparametric or primitive. In particular, if $g\ne2,3,4,6$ then $f$ is primitive.
\medskip

\section{Preliminaries}\label{sec:pre}

Note that any quartic homogeneous polynomial $f$ can be written in some orthogonal coordinates in the following \textit{normal} form:
\begin{equation}\label{reduced}
f=x_n^4+2\phi(\bar x)x_n^2+8\psi(\bar x)x_n+\theta(\bar x), \quad \bar x=(x_1,\ldots, x_{n-1}),
\end{equation}
where $\phi$, $\psi$ and $\theta$ are homogeneous polynomials of degrees 2,3 and 4 respectively.
To see this, it suffices to choose an orthonormal basis $e_1,\ldots,e_n$ of $V$ with  $e_n$ being a maximum point of the restriction of $f|_{S(V)}$.
Note that for an \textit{arbitrary} $f$, its normal form is by no means unique and the polynomials $\phi$, $\psi$ and $\theta$ carry no specific intrinsic information on $f$. As we shall see below, the situation with eikonal polynomials is remarkable and many basic algebraic properties of $f$ can be derived directly from the first component $\phi$.

Now we suppose that $f$ written in the normal form (\ref{reduced}) is eikonal, that is $f$ satisfies (\ref{Muntzer3}) for $g=4$. Then identifying  the coefficients of $x^j_n$ in (\ref{Muntzer3}) for $0\le j\le 4$ yields the following system:
\begin{equation}\label{eq1}
8\phi+|\nabla_{\bar x} \phi|^2=12\bar x^2,
\end{equation}
\begin{equation}\label{eq2}
\scal{\nabla_{\bar x} \phi}{\nabla_{\bar x} \psi}=-2\psi,
\end{equation}
\begin{equation}\label{eq3}
4\phi^2+\scal{\nabla_{\bar x} \phi}{\nabla_{\bar x} \theta}+16|\nabla_{\bar x} \psi|^2=12\bar x^4,
\end{equation}
\begin{equation}\label{eq4}
\scal{\nabla_{\bar x} \psi}{\nabla_{\bar x} \theta}=-4\phi\psi,
\end{equation}
\begin{equation}\label{eq5}
64\psi^2+|\nabla_{\bar x} \theta|^2=16\bar x^6.
\end{equation}
We can assume without loss of generality that the quadratic form  $\phi$ is diagonal, say $\phi=\sum_{i=1}^{n-1}\phi_ix_i^2$, so that (\ref{eq1}) implies  $\phi_i=1$ or $\phi_i=-3$. Let us denote by $L$ and $M$ the corresponding eigenspaces of $\bar V=V\ominus \R{}e_n$, and set $\dim L=p$ and $\dim M=q$. By denoting $\xi$ and $\eta$ the projections of $\bar x$ onto $L$ and $M$ respectively, we get
\begin{equation*}\label{contr}
\phi=\xi^2-3\eta^2, \qquad \bar x=(\xi,\eta).
\end{equation*}

Thus, with any eikonal quartic is associated an ordered pair of nonnegative integers $(p,q)$.  We shall indicate  this  by writing $f\in E_{p,q}$. Note that the pair $(p,q)$  is not uniquely determined as the following example shows.

\begin{example}\label{ex1}
Let us consider the one-dimensional subspace $H=\R{}e_1$ spanned on the vector $e_1$. Then the primitive eikonal polynomial
$$
f\equiv h_{4,H}=x_1^4-6x_1^2(x_2^2+\ldots+x_n^2)+(x_2^2+\ldots+x_n^2)^2
$$
is written in the normal form with $\phi=-3(x_2^2+\ldots+x_n^2)$, thus $f\in E_{0,n-1}$. On the other hand, one can rewrite $f$ as follows:
$$
f=x_{2}^4+2(x_3^2+\ldots x_{n}^2-3x_1^2)x_{2}^2+\theta,
$$
where $\theta$ is a quartic form in $(x_1,x_3,\ldots,x_n)$. This shows that $f\in E_{n-2,1}$.
\end{example}

Thus obtained  orthogonal decomposition $\bar V=L\oplus M$ induces the corresponding decompositions in the tensor products. We  write  $h\in \xi^i\otimes \eta^j$ if the polynomial $h$ is homogeneous in $\xi$ and $\eta$ of degrees $i$ and $j$ respectively. In particular,  by decomposing the cubic form $\psi$ into the sum of its homogeneous parts, $\sum_{i=0}^3\psi_{i}$,
where $\psi_{i}\in \xi^{i}\otimes \eta^{3-i}$, we find that
$$
\scal{\nabla_{\bar x} \phi}{\nabla_{\bar x} \psi}=2\sum_{i=0}^3 \scal{\xi}{\nabla_\xi \psi_i}-6\sum_{i=0}^3 \scal{\eta}{\nabla_\eta \psi_i}=
2\sum_{i=0}^3 (4i-9)\psi_i,
$$
hence by (\ref{eq2}), $\sum_{i=0}^3 (4i-8)\psi_i=0$. Taking into account that non-zero $\psi_i$ are linearly independent, we conclude that $\psi_0=\psi_1=\psi_3\equiv 0$. Thus, $\psi$ is completely determined by the component $\psi_2$ which is a quadratic form in $\xi$ and a linear form in $\eta$. In matrix notation this reads as follows:
\begin{equation}\label{psi}
\psi=\xi^\TOP A_\eta \xi,\qquad A_\eta:=\sum_{i=1}^q \eta_i A_i,
\end{equation}
where $A_i\in \R{p\times p}$ are symmetric matrices and $\xi^\TOP$ denotes the transpose to $\xi$.

Now we proceed with (\ref{eq3}). We have
$$
|\nabla_{\bar x}\psi|^2=|\nabla_\xi\psi|^2+|\nabla_\eta\psi|^2=4\xi^\TOP A_\eta^2\xi+\sum_{i=1}^q (\xi^\TOP A_i\xi)^2,
$$
and $\scal{\nabla_{\bar x} \phi}{\nabla_{\bar x} \theta}=8\sum_{i=0}^4 (i-3)\theta_i$, where we decomposed $\theta=\sum_{i=0}^4 \theta_i$ with $\theta_i\in \xi^i\otimes \eta^{4-i}$. Thus, (\ref{eq3}) takes the form
\begin{equation*}\label{eq30}
16(\xi^2-3\eta^2)^2+8\sum_{i=0}^4 (i-3)\theta_i+64\xi^\TOP A_\eta^2\xi+16\sum_{i=1}^q (\xi^\TOP A_i\xi)^2=12(\xi^2+\eta^2)^2.
\end{equation*}
The latter identity yields
$$
\theta_4-\theta_2-2\theta_1-3\theta_0=\xi^4-2\sum_{i=1}^q (\xi^\TOP A_i\xi)^2+(6\xi^2\eta^2-8\xi^\TOP A_\eta^2\xi)-3\eta^4,
$$
hence by identifying the homogeneous parts, we find that $\theta_1\equiv0$ and
\begin{equation}\label{theta}
\begin{split}
\theta_4&=\xi^4-2\sum_{i=1}^q (\xi^\TOP A_i\xi)^2,\\
\theta_2&=8\xi^\TOP A_\eta^2\xi-6\xi^2\eta^2,\\
\theta_0&=\eta^4.
\end{split}
\end{equation}

It follows from (\ref{theta}) and (\ref{psi}) that any solution of (\ref{Muntzer3}) $f$ is completely determined by the matrix pencil $A_\eta$ and the cubic form $\theta_3$. Our next step is to show that the matrix pencil satisfies a generalized Clifford property. To this end, we rewrite (\ref{eq4}) in the new notation:
\begin{equation}\label{eq40}
\scal{\nabla_\xi \psi}{\nabla_\xi\theta}+\scal{\nabla_\eta \psi}{\nabla_\eta\theta}=-4(\xi^2-3\eta^2)\xi^\TOP A_\eta\xi.
\end{equation}
We have $\nabla_\xi\psi=2A_\eta \xi$ and $\nabla_\eta\psi=\tau$, where
$$
\tau=(\xi^\TOP A_1\xi, \ldots, \xi^\TOP A_q\xi),
$$
hence we find from (\ref{theta}) the following gradients:
\begin{equation}\label{grad}
\begin{split}
\nabla_\xi\theta_2&=16A_\eta^2 \xi-12\eta^2 \cdot \xi,\\
\partial_{\eta_i}\theta_2&=16\xi^\TOP A_iA_\eta\xi -12\xi^2 \cdot \eta_i\\
\nabla_\xi\theta_4&=4\xi^2 \cdot \xi-8\sum_{i=1}^q (\xi^\TOP A_i\xi)A_i\xi\equiv
4\xi^2 \cdot \xi-8A_\tau \xi,\\
\nabla_\eta\theta_4&=4\eta^2 \cdot \eta.
\end{split}
\end{equation}
Substituting the found relations into (\ref{eq40}) yields
\begin{equation*}\label{eq41}
\begin{split}
4(3\eta^2-\xi^2)\xi^\TOP A_\eta\xi&=\scal{\tau}{\nabla_\eta \theta_3}+
2\scal{A_\eta\xi}{\nabla_\xi \theta_3}+32\xi^\TOP A_\eta^3\xi\\
&-20\eta^2(\xi^\TOP A_\eta\xi)
-4\xi^2(\xi^\TOP A_\eta\xi).
\end{split}
\end{equation*}
Collecting terms in the latter relation by homogeneity, we obtain additionally the following relations:
\begin{equation}\label{er1}
\scal{\tau}{\nabla_\eta\theta_3}=0,
\end{equation}
\begin{equation}\label{er2}
\scal{A_\eta\xi}{\nabla_\xi \theta_3}=0,
\end{equation}
\begin{equation}\label{er3}
\xi^\TOP A_\eta^3\xi=\eta^2(\xi^\TOP A_\eta\xi).
\end{equation}
Since (\ref{er3}) holds for any $\xi$, we get the required Clifford type matrix identity:
\begin{equation}\label{eta}
A_\eta^3=\eta^2 A_\eta.
\end{equation}

Similarly, collecting the terms in (\ref{eq5}) by homogeneity yields
\begin{equation}\label{es1}
|\nabla_\xi \theta_4|^2+|\nabla_\eta \theta_3|^2=16\xi^6,
\end{equation}
\begin{equation}\label{es2}
\scal{\nabla_\xi \theta_4}{\nabla_\xi \theta_3}+ \scal{\nabla_\eta \theta_3}{\nabla_\eta \theta_2}=0,
\end{equation}
\begin{equation}\label{es3}
64(\xi^\TOP A_\eta\xi)^2+2\scal{\nabla_\xi \theta_4}{\nabla_\xi \theta_2}+
|\nabla_\eta \theta_2|^2+|\nabla_\xi \theta_3|^2=48\xi^4\eta^2,
\end{equation}
\begin{equation}\label{es4}
\scal{\nabla_\xi \theta_3}{\nabla_\xi \theta_2}+
\scal{\nabla_\eta \theta_3}{\nabla_\eta \theta_0}=0.
\end{equation}

Now we are ready to characterize quartic eikonal polynomials with lower dimensions $p$ and $q$.

\begin{proposition}\label{pr1}
If $f\in E_{0,n-1}\cup E_{n-1,0} \cup E_{n-2,1}$ then $f$ is a primitive quartic.
\end{proposition}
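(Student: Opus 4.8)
The plan is to exploit the way the normal‑form data in (\ref{reduced}) collapses when $\min(p,q)$ is small, and then to match the surviving polynomial against the family $h_{4,H}$. If $f\in E_{0,n-1}$, the eigenspace $L$ is trivial, so $\xi\equiv 0$; then $\psi=0$ by (\ref{psi}), in (\ref{theta}) only $\theta_0=\eta^4$ survives, and since $\eta=\bar x$ we obtain $f=x_n^4-6x_n^2\bar x^2+\bar x^4=h_{4,\R{}e_n}$. Dually, if $f\in E_{n-1,0}$, the space $M$ is trivial, $A_\eta=0$, $\psi=0$, and (\ref{theta}) leaves $\theta=\theta_4=\xi^4=\bar x^4$, so $f=(x_n^2+\bar x^2)^2=|x|^4=h_{4,V}$. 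Both are primitive, so the whole content of the proposition is the case $f\in E_{n-2,1}$.

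For $f\in E_{n-2,1}$ the space $M$ is one‑dimensional; writing $\eta$ for the single coordinate spanning $M$, the pencil in (\ref{psi}) reduces to $A_\eta=\eta A$ for one symmetric matrix $A:=A_1\in\R{(n-2)\times(n-2)}$, so $\psi=\eta\,\xi^\TOP A\xi$, the Clifford identity (\ref{eta}) becomes $A^3=A$, and $\theta_3=\eta\,c(\xi)$ for some cubic form $c$ in $\xi$, while $\theta_4,\theta_2,\theta_0$ are as in (\ref{theta}). The first step is to kill $\theta_3$. Here $\tau=\xi^\TOP A\xi$ and $\nabla_\eta\theta_3=c(\xi)$, so (\ref{er1}) reads $(\xi^\TOP A\xi)\,c(\xi)\equiv 0$, whence $c\equiv 0$ whenever $A\neq 0$, the polynomial ring having no zero divisors; and when $A=0$ the same follows from (\ref{es1}), which then reads $16\xi^6+c(\xi)^2=16\xi^6$, since $\theta_4=\xi^4$ forces $|\nabla_\xi\theta_4|^2=16\xi^6$. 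Hence $\theta_3=0$ in all subcases.

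If $A=0$, then $\psi=0$ and by (\ref{theta}) $\theta=\xi^4-6\xi^2\eta^2+\eta^4$, so $f=(x_n^2+\xi^2)^2-6\eta^2(x_n^2+\xi^2)+\eta^4=h_{4,H}$ with $H=\R{}e_n\oplus L$. If $A\neq 0$, substituting $\theta_3=0$ into (\ref{es1}) gives $(\xi^\TOP A\xi)^2(\xi^\TOP A^2\xi-\xi^2)\equiv 0$, hence $A^2=I$; diagonalizing $A=\mathrm{diag}(I_a,-I_b)$ by an orthogonal change of the $\xi$‑coordinates and splitting $\xi=(u,v)$ accordingly (so that $\xi^\TOP A\xi=|u|^2-|v|^2$), a direct substitution into (\ref{reduced}) using (\ref{psi}) and (\ref{theta}) collapses $f$ to $f=|x|^4-2\bigl(2\eta x_n-|u|^2+|v|^2\bigr)^2$. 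The last step is the only non‑mechanical one: after the orthonormal rotation $s=(\eta+x_n)/\sqrt 2$, $t=(\eta-x_n)/\sqrt 2$ one has $2\eta x_n=s^2-t^2$, so the bracket equals $P_+-P_-$ with $P_+=s^2+|v|^2$ and $P_-=t^2+|u|^2$ sums of squares over complementary coordinate subspaces and $P_++P_-=|x|^2$; then $(P_+-P_-)^2=|x|^4-4P_+P_-$ gives $f=-|x|^4+8P_+P_-=-h_{4,H}$ for the corresponding $H$, a primitive quartic.

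The only real difficulty is keeping the bookkeeping in the subcase $A\neq 0$ under control, so that the reduction to the compact form $f=|x|^4-2(\,\cdot\,)^2$ is transparent; once that form is in hand the identification with $-h_{4,H}$ is immediate, and everything else — the two degenerate cases and the vanishing of $\theta_3$ — is routine.
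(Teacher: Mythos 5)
Your proof is correct and follows essentially the same route as the paper: the same normal-form reduction, the same use of (\ref{er1}) and the zero-divisor argument to kill $\theta_3$, the same consequence of (\ref{es1}) (your direct derivation of $A^2=I$ is just a rephrasing of the paper's argument that $L^0=\{0\}$ in the eigendecomposition of $A_1$), and the same rotation in the $(x_n,\eta_1)$-plane to identify $f$ with $\pm h_{4,H}$. No gaps.
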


\begin{proof}
First consider the case $f\in E_{0,n-1}\cup E_{n-1,0}$. Then $qp=0$, thus $\psi\equiv0$. If $q=0$ then $\bar x=\xi$ and by (\ref{theta}), $\theta\equiv\theta_4=\xi^4$. Hence (\ref{reduced}) yields the required property, because
$$
f=x_n^4+2x_n^2\, \bar x^2+\bar x^4=(x_n^2+\bar x^2)^2= x^4\equiv h_{4,V}.
$$
If $p=0$ then $\bar x=\eta$ and (\ref{theta}) yields $\theta =\theta_0=\bar x^4$. This again shows that $f$ is primitive:
$$
f=x_n^4-6x_n^2\, \bar x^2+\bar x^4\equiv h_{4,H},
$$
where $H=\R{}e_1$.

It  remains to consider the case $q=1$. Then $\eta\equiv\eta_1$, so that $\theta_3=g_1(\xi)\eta_1$, where $g_1$ is a cubic form in $\xi$. Furthermore, (\ref{er1}) implies $(\xi^\TOP A_1\xi)g_1(\xi)\equiv 0$. Since there are no zero divisors in the polynomial ring $\R{}[\xi_1,\ldots,\xi_p]$, either $\xi^\TOP A_1\xi$ or $g_1(\xi)$ must be identically zero.
If $\xi^\TOP A_1\xi\equiv 0$ then  $A_1=0$ and by virtue of (\ref{theta})
\begin{equation}\label{last}
\theta_4=\xi^4, \quad \theta_3=g(\xi)\eta_1, \quad \theta_2=-6\xi^2\eta_1^2, \quad \theta_0=\eta_1^4.
\end{equation}
By using (\ref{eq5}) and the last identity one finds $16\xi^6+g_1^2=16\xi^6,$ hence $g_1\equiv 0$. Therefore, by virtue of (\ref{last}) and (\ref{reduced})
\begin{equation*}
\begin{split}
f&=x_n^4+2x_n^2(\xi^2-3\eta_1^2)+\xi^4-6\xi^2\eta_1^2+\eta_1^4\\
&=\eta_1^4-6(x_n^2+\xi^2)\eta_1^2+(x_n^2+\xi^2)^2\equiv h_{4,H},
\end{split}
\end{equation*}
where $H$ is a one-dimensional subspace spanned on the coordinate vector corresponding to $\eta_1$.

Now suppose that $A_1\ne 0$. Then $g_1\equiv 0$ and (\ref{eta}) implies that $A_1^3=A_1$. Hence $A_1$ is a symmetric matrix with eigenvalues $\pm1$ and $0$. Denote by $L=L^+\oplus L^-\oplus L^0$ the corresponding  eigen decomposition of $L$. Since $A_1\ne 0$, the subspace $L^+\oplus L^-$ is nontrivial. We claim that $L^0=\{0\}$. Indeed, let $\xi=u\oplus v\oplus w$ be the decomposition of an arbitrary $\xi\in L$ according to the eigen decomposition of $L$. Then $\xi^\TOP A_1\xi=u^2-v^2$,  and by (\ref{theta}), $\theta_4=(u^2+v^2+w^2)^4-2(u^2-v^2)^2$. Since $\theta_3= g_1\eta_1\equiv 0$, we obtain by virtue of (\ref{es1})
\begin{equation*}
\begin{split}
0\equiv  |\nabla_\xi \theta_4|^2-16\xi^6=-64(u^2-v^2)^2w^2,
\end{split}
\end{equation*}
which implies that $w$ the zero vector. Thus $L^0$ is trivial and it follows that $A_1^2=\mathbf{1}$.
By virtue of  (\ref{theta}), we obtain
\begin{equation*}
\begin{split}
\theta&=\xi^4-2(\xi^\TOP A_1\xi)^2+8(\xi^\TOP A_1^2\xi)\eta_1^2-6\xi^2\eta^2+\eta_1^4\\
&=(u^2+v^2)^2-2(u^2-v^2)^2+8(u^2+v^2)\eta_1^2-6(u^2+v^2)\eta_1^2+\eta_1^4\\
&=(u^2+v^2+\eta_1^2)^2-2(u^2-v^2)^2,
\end{split}
\end{equation*}
hence
\begin{equation*}
\begin{split}
f&=x_n^4+2x_n^2(u^2+v^2-3\eta_1^2)+8x_n\eta_1(u^2-v^2)+(u^2+v^2+\eta_1^2)^2-2(u^2-v^2)^2\\
&=(x_n^2+u^2+v^2+\eta_1^2)^2-2(u^2-v^2-2x_n\eta_1)^2.
\end{split}
\end{equation*}
By making an orthogonal change of coordinates $\eta_1=\frac{1}{\sqrt{2}}(t_1+t_2)$ and $x_n=\frac{1}{\sqrt{2}}(t_2-t_1)$, we obtain
\begin{equation*}
\begin{split}
f&=(u^2+t_1^2+v^2+t_2^2)^2-2(u^2+t_1^2-v^2-t_1^2)^2\\
&=-(u^2+t_1^2)^2+6(u^2+t_1^2)(v^2+t_1^2)-(v^2+t_2^2)^2\equiv -h_{4,H},
\end{split}
\end{equation*}
where $H$ is spanned on $u$ and $t_1$. It follows that $f$ is primitive.
\end{proof}

\section{Harmonicity of $\theta_3$}

In what follows we shall assume that $f\in E_{p,q}$ with $p\ge 1$ and $q\ge 2$. We need the following general fact on the matrix solutions of (\ref{eta}).

\begin{lemma}\label{lem2}
Let $A_i\in \R{p\times p}$, $1\le i\le q$, be symmetric matrices satisfying $A_\eta^3=\eta^2 A_\eta$, where $A_\eta= \sum_{i=1}^q\eta_iA_i$. If $q\ge2$ then
\begin{itemize}
\item[$(\mathrm{i})$]
there are nonnegative integers $\nu$ and $\mu$, $2\nu+\mu=p$, such that for any  $\eta\in \R{q}$, $\eta\ne 0$ and for any $i$, $1\le i\le q$,  the matrices $\frac{1}{|\eta|}A_\eta$ and $A_i$ have eigenvalues $\pm1$ of multiplicity $\nu$ and the eigenvalue $0$ of multiplicity $\mu$;
\item[$(\mathrm{ii})$]
all $A_i$ are trace free;
\item[$(\mathrm{iii})$]
$A_i^3=A_i$ for any $i$.
\item[$(\mathrm{iv})$]
for any two vectors $t,s\in \R{q}$ such that $\scal{s}{t}=0$,
\begin{equation*}\label{iv}
A_s^2A_t+A_sA_tA_s+A_tA_s^2=s^2 A_t.
\end{equation*}
\end{itemize}
\end{lemma}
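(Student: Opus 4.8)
The whole lemma should come out of the single identity $A_\eta^3=\eta^2A_\eta$ by two elementary devices — specializing $\eta$ (or letting it vary continuously on the sphere) and polarizing — so I anticipate no serious difficulty. I would begin with $(\mathrm{iii})$, which is immediate: putting $\eta=e_i$ gives $A_\eta=A_i$ and $\eta^2=1$, hence $A_i^3=A_i$. For $(\mathrm{i})$, fix a unit vector $\eta$; since $A_\eta$ is symmetric it is orthogonally diagonalizable, and $A_\eta^3=A_\eta$ forces every eigenvalue $\lambda$ to satisfy $\lambda(\lambda^2-1)=0$, i.e.\ $\lambda\in\{-1,0,1\}$. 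List the eigenvalues of $A_\eta$ in nondecreasing order as $\lambda_1(\eta)\le\ldots\le\lambda_p(\eta)$; by the standard perturbation estimate for symmetric matrices each $\lambda_j$ is a Lipschitz function of $A_\eta$, and $A_\eta$ depends linearly on $\eta$, so each $\lambda_j$ is continuous on the unit sphere of $\R{q}$. That sphere is connected precisely because $q\ge2$, and a continuous map from a connected set into the discrete set $\{-1,0,1\}$ is constant; hence every $\lambda_j$ is a constant independent of $\eta$, and so are the multiplicities of $1$, $-1$ and $0$ in the spectrum of $A_\eta$. Replacing $\eta$ by $-\eta$ and using $A_{-\eta}=-A_\eta$ gives $\lambda_j=-\lambda_{p+1-j}$ for all $j$, so the multiplicities of $+1$ and $-1$ coincide; set $\nu$ equal to this common value and $\mu=p-2\nu$, so $2\nu+\mu=p$. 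Finally, homogeneity ($A_\eta=|\eta|\,A_{\eta/|\eta|}$) extends this spectral description from unit $\eta$ to all $\eta\ne0$, and the case $\eta=e_i$ recovers it for each $A_i$.

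Part $(\mathrm{ii})$ is then a one-liner: $\trace A_i$ equals the sum of the eigenvalues of $A_i$, namely $\nu\cdot1+\nu\cdot(-1)+\mu\cdot0=0$ (and the same argument gives $\trace A_\eta=0$ for every $\eta$). For $(\mathrm{iv})$ I would polarize the hypothesis: since $\eta\mapsto A_\eta$ is linear, $A_{as+bt}=aA_s+bA_t$ for all scalars $a,b$, and the identity becomes
\begin{equation*}
(aA_s+bA_t)^3=\bigl(a^2s^2+2ab\scal{s}{t}+b^2t^2\bigr)(aA_s+bA_t).
\end{equation*}
Expanding both sides as polynomials in $a$ and $b$, imposing $\scal{s}{t}=0$, and comparing the coefficients of $a^2b$ yields exactly $A_s^2A_t+A_sA_tA_s+A_tA_s^2=s^2A_t$.

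The only step needing any care is the continuity-and-connectedness argument in $(\mathrm{i})$, and it is the sole place where the hypothesis $q\ge2$ is genuinely used: for $q=1$ the unit sphere of $\R{q}$ is just two points, connectedness fails, and the conclusion degenerates to $(\mathrm{iii})$ alone — any single symmetric matrix $A$ with $A^3=A$ satisfies the assumption, so $(\mathrm{i})$ and $(\mathrm{ii})$ really can fail there. One must also be mildly careful to argue with the ordered eigenvalue functions $\lambda_j(\eta)$, not with a single eigenvalue followed by some fixed rule. Apart from stating that topological point cleanly, everything is short bookkeeping.
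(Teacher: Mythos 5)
Your proof is correct and follows essentially the same route as the paper: connectedness of the unit sphere in $\R{q}$ for $q\ge2$ forces the (discrete-valued) spectral data of $A_\eta$ to be constant, and $(\mathrm{iv})$ is obtained by polarizing $A_\eta^3=\eta^2A_\eta$ and extracting the coefficient of $a^2b$ (the paper sets $\eta=s+\lambda t$ and reads off the $\lambda$-coefficient, which is the same computation). The only difference is a minor one: you get $\nu^+=\nu^-$ from the symmetry $A_{-\eta}=-A_\eta$, whereas the paper observes that $\trace A_\eta$ is linear in $\eta$ yet equals $|\eta|(\nu^+-\nu^-)$, and $|\eta|$ is non-linear for $q\ge2$; both are valid.
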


\begin{proof}
The assumption $A_\eta^3=\eta^2 A_\eta$ implies that for any $\eta\ne 0$, the eigenvalues of $A_\eta$ are $\pm|\eta|$ and $0$. Denote by $\nu^{\pm}(\eta)$ and $\mu(\eta)$ the multiplicities of $\pm|\eta|$ and $0$ respectively. For $q\ge 2$ the unit sphere $S=\{\eta\in M: |\eta|=1\}$ is connected and the traces
$$
\trace A_\eta=\nu^+(\eta) - \nu^-(\eta), \qquad
\trace A^2_\eta=\nu^+(\eta) + \nu^-(\eta),
$$
as functions defined on the unit sphere $S$ are continuous and integer-valued, thus they must be constants. It follows that $\nu^+(\eta)$ and $\nu^-(\eta)$ are also constants. In particular, $\mu(\eta)=q-\nu^+(\eta) - \nu^-(\eta)$  is a constant.

We  notice that
\begin{equation}\label{trace}
\trace A_\eta\equiv \sum_{i=1}^q \eta_i\trace A_i=|\eta|(\nu^+ - \nu^-).
\end{equation}
Since $|\eta|$ is non-linear for $q\ge 2$, we conclude that  $\nu^+=\nu^-$. We denote by $\nu$ the common value of $\nu^\pm$. Then (\ref{trace}) yields $\trace A_\eta=0$, hence $\trace A_i=0$ for all $i$.  Since $A_i=A_{e_i}$ and  $|e_i|=1$, we conclude that $A_i$ has eigenvalues $\pm1$ and $0$ of multiplicities $\nu$ and $\mu=q-2\nu$ respectively. This proves (i)--(iii).
In order to prove (iv), we put $\eta=s+\lambda t$ in (\ref{eta}) and identify the coefficient of $\lambda$.  The proposition is proved.
\end{proof}

\begin{corollary}\label{cor:A}
With any solution $f\in E_{p,q}$, $q\ge2$, one can associate nonnegative integers  $\nu$ and $\mu=p-2\nu$ such that the matrix $A_\eta$ defined by (\ref{psi}) is  similar to the diagonal trace free matrix
\begin{equation}\label{similar}
A_\eta\sim |\eta|\left(
        \begin{array}{ccc}
          \mathbf{1}_\nu &  &  \\
           & -\mathbf{1}_\nu & \\
           &  & \mathbf{0}_\mu\\
        \end{array}
      \right),
\end{equation}
where $\mathbf{1}_\nu$ and $\mathbf{0}_\mu$ stands for the unit matrix and the zero matrix of sizes $\nu\times \nu$ and $\mu\times \mu$ respectively and the elements not shown are all zero.
\end{corollary}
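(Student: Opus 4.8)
The plan is to read the statement off directly from Lemma~\ref{lem2}, which already contains all the substantive content. First I would fix $\eta\ne 0$ and observe that $A_\eta=\sum_{i=1}^q\eta_iA_i$ is a real symmetric matrix, being a real linear combination of the symmetric matrices $A_i$. Hence $A_\eta$ is orthogonally diagonalizable, and in particular it is similar to the diagonal matrix whose diagonal entries are the eigenvalues of $A_\eta$ listed with their multiplicities. By part (i) of Lemma~\ref{lem2} applied to the unit vector $\eta/|\eta|$, the matrix $\frac{1}{|\eta|}A_\eta$ has eigenvalue $+1$ of multiplicity $\nu$, eigenvalue $-1$ of multiplicity $\nu$, and eigenvalue $0$ of multiplicity $\mu$, where $2\nu+\mu=p$. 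Multiplying by $|\eta|$, the eigenvalues of $A_\eta$ are $\pm|\eta|$, each with multiplicity $\nu$, and $0$ with multiplicity $\mu$, which is exactly the spectrum of the right-hand side of (\ref{similar}); this yields the claimed similarity.

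Next I would verify the trace-free assertion, which can be seen in two equivalent ways: the right-hand side of (\ref{similar}) has trace $|\eta|(\nu-\nu+0)=0$, and, independently, $\trace A_\eta=\sum_{i=1}^q\eta_i\trace A_i=0$ by part (ii) of Lemma~\ref{lem2}. Finally, for $\eta=0$ the matrix $A_\eta$ is the zero matrix and $|\eta|(\cdots)=0$ as well, so the assertion holds trivially; splitting into the cases $\eta\ne 0$ and $\eta=0$ completes the argument.

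I expect no genuine obstacle here, since the corollary is a direct repackaging of Lemma~\ref{lem2}; the only thing added is the elementary fact that a real symmetric matrix is orthogonally similar to the diagonal matrix of its eigenvalues. The single point deserving a word of care is that the integers $\nu$ and $\mu$ are independent of $\eta$ — but this is precisely what Lemma~\ref{lem2}(i) asserts (it was obtained from the connectedness of the unit sphere $S$ in $M$, valid for $q\ge 2$), so nothing further is required.
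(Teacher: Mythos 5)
Your proposal is correct and matches the paper's intent exactly: the corollary is stated without a separate proof precisely because it is the immediate repackaging of Lemma~\ref{lem2}(i)--(ii) (applicable here since the matrices $A_i$ from (\ref{psi}) satisfy (\ref{eta})), combined with the orthogonal diagonalizability of the real symmetric matrix $A_\eta$. Your handling of the $\eta=0$ case and the $\eta$-independence of $\nu,\mu$ adds nothing beyond what the lemma already guarantees, but is harmless.
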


\begin{remark}\label{rem1}
For the sake of convenience, we shall indicate the situation in Corollary~\ref{cor:A}  by writing $f\in E_{p,q}^{\nu}$.
\end{remark}

\begin{lemma}\label{lem3}
If $f\in E_{p,q}^{\nu}$, $q\ge 2$, $p\ge1$, then
\begin{equation}\label{lapl3}
\begin{split}
\Delta_x f=4(p-3q+3)(x_n^2+\xi^2)+4(8\nu-1+q-3p)\eta^2+\Delta_\xi\theta_3.
\end{split}
\end{equation}
\end{lemma}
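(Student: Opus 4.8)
The plan is to prove (\ref{lapl3}) by a direct computation, decomposing the ambient Laplacian as $\Delta_x=\partial_{x_n}^2+\Delta_\xi+\Delta_\eta$ and applying it termwise to the normal form $f=x_n^4+2\phi x_n^2+8\psi x_n+\theta$ of (\ref{reduced}), where $\phi=\xi^2-3\eta^2$, $\psi=\xi^\TOP A_\eta\xi$ is given by (\ref{psi}), and $\theta=\theta_4+\theta_3+\theta_2+\theta_0$ is given by (\ref{theta}) (recall $\theta_1\equiv0$). The elementary facts I would use are: for a constant symmetric $B$ one has $\Delta(\xi^\TOP B\xi)=2\trace B$; in $d$ variables $\Delta(|x|^4)=4(d+2)|x|^2$; and $\Delta_\xi\bigl((\xi^\TOP A\xi)^2\bigr)=8\,\xi^\TOP A^2\xi$ for symmetric $A$. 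The structural input comes from Lemma~\ref{lem2}: by (ii) every $A_i$, hence $A_\eta$, is trace free, and by (i) each $A_i$ has eigenvalues $\pm1$ of multiplicity $\nu$ and $0$ of multiplicity $\mu$, so that $\trace A_i^2=2\nu$ and $\trace A_\eta^2=2\nu\,\eta^2$.

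First I would dispose of the terms not involving $\theta$. Since $\partial_{x_n}^2(x_n^4)=12x_n^2$ and $\partial_{x_n}^2(2\phi x_n^2)=4\phi$, one gets $\partial_{x_n}^2f=12x_n^2+4\xi^2-12\eta^2$. Next $\Delta_{\bar x}(2\phi x_n^2)=2x_n^2\,\Delta_{\bar x}\phi=2x_n^2(2p-6q)$ because $\phi=\xi^2-3\eta^2$. Finally $\Delta_{\bar x}(8\psi x_n)=8x_n\,\Delta_{\bar x}\psi=0$: indeed $\psi$ is linear in $\eta$, so $\Delta_\eta\psi=0$, while $\Delta_\xi\psi=2\trace A_\eta=0$ by Lemma~\ref{lem2}(ii).

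Then I would handle $\theta$ through its four homogeneous parts, using (\ref{theta}). From $\theta_0=\eta^4$ we get $\Delta_{\bar x}\theta_0=\Delta_\eta(\eta^4)=(4q+8)\eta^2$. From $\theta_4=\xi^4-2\sum_i(\xi^\TOP A_i\xi)^2$ we get $\Delta_{\bar x}\theta_4=(4p+8)\xi^2-16\sum_i\xi^\TOP A_i^2\xi$. For $\theta_2=8\xi^\TOP A_\eta^2\xi-6\xi^2\eta^2$, applying $\Delta_\xi$ with $\eta$ frozen gives $8\cdot2\trace A_\eta^2-12p\,\eta^2=(32\nu-12p)\eta^2$, while applying $\Delta_\eta$ with $\xi$ frozen gives $16\sum_i\xi^\TOP A_i^2\xi-12q\,\xi^2$, since $\xi^\TOP A_\eta^2\xi=\sum_{i,j}\eta_i\eta_j\,\xi^\TOP A_iA_j\xi$ is a quadratic form in $\eta$ whose trace is $\sum_i\xi^\TOP A_i^2\xi$. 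The point to notice is that the two occurrences of $\sum_i\xi^\TOP A_i^2\xi$, one coming from $\Delta_\xi\theta_4$ and one from $\Delta_\eta\theta_2$, cancel. Finally $\theta_3\in\xi^3\otimes\eta^1$ is linear in $\eta$, so $\Delta_\eta\theta_3=0$ and $\Delta_{\bar x}\theta_3=\Delta_\xi\theta_3$.

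It then remains to collect terms. Summing all contributions, the coefficient of $x_n^2$ is $12+4p-12q$, the coefficient of $\xi^2$ is $4+(4p+8)-12q=12+4p-12q$, and the coefficient of $\eta^2$ is $-12+(4q+8)+(32\nu-12p)=32\nu+4q-12p-4$. Since the $x_n^2$ and $\xi^2$ coefficients coincide and equal $4(p-3q+3)$, while $32\nu+4q-12p-4=4(8\nu-1+q-3p)$, one obtains exactly (\ref{lapl3}). There is no genuinely hard step here; the only things requiring care are keeping track of the several $\sum_i\xi^\TOP A_i^2\xi$ contributions and observing that they cancel, and for this the eigenvalue multiplicities supplied by Lemma~\ref{lem2} are precisely what is needed.
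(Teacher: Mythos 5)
Your computation is correct and follows essentially the same route as the paper: decompose $\Delta_x=\partial_{x_n}^2+\Delta_\xi+\Delta_\eta$, apply it termwise to the normal form using the trace identities $\trace A_i=0$ and $\trace A_\eta^2=2\nu\eta^2$ from Lemma~\ref{lem2} and Corollary~\ref{cor:A}, and observe the cancellation of the two $\sum_i\xi^\TOP A_i^2\xi$ contributions. One small caveat: your stated identity $\Delta_\xi\bigl((\xi^\TOP A\xi)^2\bigr)=8\,\xi^\TOP A^2\xi$ holds for symmetric $A$ only when $\trace A=0$ (in general there is an extra term $4(\xi^\TOP A\xi)\trace A$), but since you apply it only to the trace-free matrices $A_i$ this does not affect the argument.
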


\begin{proof}
By virtue of (\ref{reduced}), $f=x_n^4+2(\xi^2-3\eta^2)x_n^2+8(\xi^\TOP A_\eta\xi)x_n+\theta$.
Applying (ii) in Lemma~\ref{lem2} we find for the Laplacian
\begin{equation}\label{lapl}
\begin{split}
\Delta_x f&\equiv (\partial_{x_n}^2 +\Delta_\xi+\Delta_\eta)f\\
&=12x_n^2+4(\xi^2-3\eta^2)+4x_n^2(p-3q)+16x_n\trace A_\eta+\Delta_\xi\theta+\Delta_\eta\theta\\
&=4(p-3q+3)x_n^2+4(\xi^2-\eta^2)+\Delta_\xi\theta+\Delta_\eta\theta.
\end{split}
\end{equation}
By (\ref{similar}), $\trace A_\eta^2=2\nu\eta^2$, hence we find from (\ref{theta})
\begin{equation*}\label{lapl1}
\begin{split}
\Delta_\xi\theta
&=4(p+2)\xi^2+(32\nu-12p)\eta^2-16\sum_{i=1}^q \xi^\TOP A_i^2\xi.
\end{split}
\end{equation*}
Similarly we find $\Delta_\eta\theta=\Delta_{\eta}\theta_3+16\sum_{i=1}^q \xi^\TOP A_i^2\xi- 12q\xi^2+4(q+2)\eta^2$,
and combining these formulas with (\ref{lapl}) yields (\ref{lapl3}).
\end{proof}

Now we are ready to proof the main result of this section.

\begin{proposition}\label{th:eta}
If $f\in E_{p,q}^{\nu}$, $q\ge 2$, $p\ge1$, then $\Delta_\eta\theta_3=0$.
\end{proposition}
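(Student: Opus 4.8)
The plan is to extract from the remaining homogeneity relations a system of polynomial identities that forces the harmonic part $\Delta_\eta\theta_3$ of the cubic form $\theta_3\in\xi^1\otimes\eta^2$ (it is linear in $\xi$, quadratic in $\eta$, so $\Delta_\eta\theta_3$ is linear in $\xi$ and has no $\eta$-dependence) to vanish identically. First I would write $\theta_3=\sum_{j=1}^{p}\xi_j\, q_j(\eta)$ with $q_j$ quadratic forms in $\eta$, so that $\Delta_\eta\theta_3=\sum_j\xi_j\,\Delta_\eta q_j=\sum_j c_j\xi_j$ where $c_j=\operatorname{tr}(\text{Hess}\,q_j)$ is a constant; the goal is $c_j=0$ for all $j$. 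The natural route is to take suitable contractions (Laplacians, scalar products with position vectors) of the three ``$\theta_3$-relations'' (\ref{er1}), (\ref{er2}) together with the four relations (\ref{es1})--(\ref{es4}) coming from (\ref{eq5}), and combine them with the structural information already in hand: the Clifford identity (\ref{eta}), its polarized form Lemma~\ref{lem2}(iv), the trace-freeness $\operatorname{tr}A_i=0$, and the spectral normalization $\operatorname{tr}A_\eta^2=2\nu\eta^2$ from Corollary~\ref{cor:A}.

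The cleanest-looking handle is (\ref{es1}), $|\nabla_\xi\theta_4|^2+|\nabla_\eta\theta_3|^2=16\xi^6$: since by (\ref{grad}) $\nabla_\xi\theta_4=4\xi^2\xi-8A_\tau\xi$, expanding $|\nabla_\xi\theta_4|^2$ and using $\xi^\TOP A_\tau\xi=\sum_i(\xi^\TOP A_i\xi)^2=:|\tau|^2$ and the identity $A_\tau^2\xi$-terms (which one rewrites via $\sum_i A_iA_j$ data) should show $|\nabla_\xi\theta_4|^2$ already equals $16\xi^6$ up to a manifestly nonnegative correction, forcing $|\nabla_\eta\theta_3|^2$ to be a specific nonnegative quadratic in the $q_j$; applying $\Delta_\eta$ or evaluating on the fibre where $\theta_3$ could be ``pure trace'' then pins down the $c_j$. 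Alternatively, and probably more robustly, I would apply $\Delta_\eta$ to (\ref{es4}) and $\Delta_\xi$ to (\ref{es2}), producing linear relations among the $c_j$ and traces of $A_i$-products, and feed in Lemma~\ref{lem2}(iv) to kill the $A$-terms, isolating $\sum_j c_j(\cdots)=0$ in enough independent ways to conclude each $c_j=0$. Throughout one exploits that nonzero homogeneous components in distinct $\xi^i\otimes\eta^j$ bidegrees are linearly independent, exactly as was used to derive (\ref{er1})--(\ref{er3}).

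The main obstacle I expect is bookkeeping the cross-terms involving the matrix pencil: relations like (\ref{es2}) and (\ref{es3}) mix $\nabla_\xi\theta_3$, $\nabla_\eta\theta_3$, $\nabla_\xi\theta_2=16A_\eta^2\xi-12\eta^2\xi$ and $\nabla_\eta\theta_2$, and to decouple the $\theta_3$-information from the $A_i$-information one must repeatedly invoke the Clifford relations $A_\eta^3=\eta^2A_\eta$, $A_s^2A_t+A_sA_tA_s+A_tA_s^2=s^2A_t$, and $\operatorname{tr}A_\eta^2=2\nu\eta^2$, and check that after these substitutions the surviving $\theta_3$-terms are nondegenerate enough to force harmonicity rather than merely some weaker linear constraint on $\Delta_\eta\theta_3$. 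A secondary subtlety is that $\Delta_\eta\theta_3$ is a priori only a linear form in $\xi$, so one needs $p$ independent scalar identities; I would obtain them by contracting the relevant relation against $\xi$ in all coordinate directions (equivalently, comparing coefficients of each $\xi_j$), using that the relation holds identically in $\xi$. Once $\Delta_\eta\theta_3=0$ is established, it becomes, together with Lemma~\ref{lem3}, the key input for computing $\Delta_x f$ and separating the isoparametric case from the primitive one in the sequel.
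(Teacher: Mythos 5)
Your proposal is built on a misreading of the bidegree of $\theta_3$, and this vitiates the whole plan. By the paper's convention $\theta_i\in\xi^i\otimes\eta^{4-i}$, the component $\theta_3$ is a \emph{quartic} that is cubic in $\xi$ and \emph{linear} in $\eta$ (one writes $\theta_3=8\sum_{i=1}^q g_i(\xi)\eta_i$ with each $g_i$ a cubic form in $\xi$), not an element of $\xi^1\otimes\eta^2$ as you assume; note that a form in $\xi^1\otimes\eta^2$ would have total degree $3$ and could not be a homogeneous component of the quartic $\theta$. With the correct bidegree, $\Delta_\eta\theta_3=0$ holds for the trivial reason that $\theta_3$ is linear in $\eta$ --- a fact the paper itself invokes later --- so the subscript in the statement is a typo: the nontrivial content, the one actually used in Corollary~\ref{cor:last} to pass from (\ref{lapl3}) to (\ref{lapl4}) and the one matching the section title, is $\Delta_\xi\theta_3=0$, i.e.\ that each $g_i$ is harmonic in $\xi$. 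Your reduction to $p$ constants $c_j=\Delta_\eta q_j$ therefore targets an object that does not exist; the quantity that must be shown to vanish, $\Delta_\xi\theta_3$, is a bilinear form in $(\xi,\eta)$ with $pq$ coefficients.

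Even after correcting the target, the proposal contains no mechanism that would force harmonicity; it is a list of contractions one might try. The argument that actually works is structural: fix $i=1$, split $L=L_1^+\oplus L_1^-\oplus L_1^0$ into the $\pm1,0$ eigenspaces of $A_1$ furnished by Lemma~\ref{lem2}, and decompose $g_1=\sum_s G_s$ by multidegree $s=(s_1,s_2,s_3)$ in $\xi=u\oplus v\oplus w$. The coefficient of $\eta_1$ in (\ref{er2}) gives $\scal{A_1\xi}{\nabla_\xi g_1}=0$, hence $\sum_s(s_1-s_2)G_s=0$ by Euler's identity and so $s_1=s_2$; the coefficient of $\eta_1^3$ in (\ref{es4}), rewritten via (\ref{grad}) and (\ref{theta}) as $\scal{\nabla_\xi\theta_3}{A_\eta^2\xi}=2\eta^2\theta_3$, gives $\scal{\nabla_\xi g_1}{A_1^2\xi}=2g_1$ and so $s_1+s_2=2$. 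Hence $s=(1,1,1)$: each $g_i$ is trilinear across the three eigenspaces, so every variable occurs to first power and $\Delta_\xi g_1=0$. None of your proposed moves produces these two Euler-type constraints, and your one concrete claim --- that $|\nabla_\xi\theta_4|^2$ equals $16\xi^6$ up to a \emph{nonnegative} correction --- has the sign backwards: by (\ref{th3eta}) one has $|\nabla_\xi\theta_4|^2-16\xi^6=64(\xi^\TOP A_\tau^2\xi-\xi^2\tau^2)\le 0$ (the eigenvalues of $A_\tau^2$ are $\tau^2$ and $0$); were the correction nonnegative, the identity (\ref{es1}) would force $\nabla_\eta\theta_3\equiv0$ and hence $\theta_3\equiv0$, which is false for the isoparametric solutions the theorem must accommodate.
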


\begin{proof}
Write $\theta_3=8\sum_{i=1}^q g_i(\xi)\eta_i$, where $g_i$ are cubic forms in $\xi$. It suffices to show that $\Delta_\xi g_1\equiv 0$.
To this end, let us consider the eigen decomposition $L=L_1^+\oplus L_1^-\oplus L_1^0$ associated with $A_1$ according Lemma~\ref{lem2}, and decompose the cubic form $g_1$ into the corresponding homogeneous parts:
\begin{equation}\label{eqg}
g_1=\sum_{s}G_{s}, \qquad G_s\in {u}^{s_1}\otimes {v}^{s_2} \otimes {w}^{s_3},
\end{equation}
where $s=(s_1,s_2,s_3)$, $s_1+s_2+s_3=3$, and $\xi =u\oplus v\oplus w$, $u\in L^+$, $v\in L^-$, $w\in L^0$.

By (\ref{er2}), $\scal{A_1\xi}{\nabla_\xi g_1}=0$. We have $A_1\xi=u-v$, so that applying  (\ref{eqg}) and the homogeneity of each $G_s$, we obtain
$$
0=\sum_{s}\scal{\nabla_\xi G_s}{A_1\xi}=\sum_{s}(\scal{\nabla_u G_s}{u}-\scal{\nabla_v G_s}{v})
=\sum_{s}(s_1-s_2)G_s.
$$
Since non-zero components $G_s$ are linear independent, we have $s_1=s_2$. This yields
\begin{equation}\label{eks}
s\in \{(1,1,1), \,(0,0,3)\}
\end{equation}

On the other hand, we infer from (\ref{es4}) by virtue of (\ref{grad}) and (\ref{theta})  that
$$
\scal{\nabla_\xi \theta_3}{A_\eta^2 \xi}=2\eta^2 \theta_3.
$$
By identifying the coefficients of $\eta_1^3$, $\scal{\nabla_\xi g_1}{A_1^2 \xi}=2g_1$, thus, by virtue of
$A_1^2\xi=u+v$,
$$
2g_1=\scal{\nabla_\xi g_1}{A_1^2 \xi}\equiv \sum_{s}\scal{A_1^2\xi}{\nabla_\xi G_s}
=\sum_{s}(s_1+s_2)G_s.
$$
Comparing with (\ref{eqg}) implies $s_1+s_2=2$, hence by (\ref{eks}) we obtain $s=(1,1,1)$, i.e.
$g_1\in u\otimes v\otimes w$ is a trilinear form. It follows  that $\Delta_\xi g_1=0$. The theorem is proved.
\end{proof}

\begin{corollary}\label{cor:last}
If $f\in E_{p,q}^{\nu}$, $q\ge 2$, $p\ge1$, then
\begin{equation}\label{lapl4}
\begin{split}
\Delta_x f=4(p-3q+3)(x_n^2+\xi^2)+4(8\nu-1+q-3p)\eta^2.
\end{split}
\end{equation}
\end{corollary}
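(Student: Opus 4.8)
The plan is to obtain (\ref{lapl4}) by combining the two immediately preceding results, since all the computational content has already been assembled. By Lemma~\ref{lem3}, the standing hypotheses $q\ge2$, $p\ge1$ give
$$
\Delta_x f=4(p-3q+3)(x_n^2+\xi^2)+4(8\nu-1+q-3p)\eta^2+\Delta_\xi\theta_3 .
$$
Hence the asserted formula (\ref{lapl4}) is equivalent to the single claim that the residual term $\Delta_\xi\theta_3$ vanishes identically, and the whole corollary reduces to the harmonicity of $\theta_3$ in the $\xi$-variables.

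That harmonicity is precisely what Proposition~\ref{th:eta} delivers. Writing $\theta_3=8\sum_{i=1}^q g_i(\xi)\eta_i$ with each $g_i$ a cubic form in $\xi$, the proof of that proposition shows, via the constraints (\ref{er2}) and (\ref{es4}) together with the eigenspace decomposition $L=L_i^+\oplus L_i^-\oplus L_i^0$ of $A_i$ supplied by Lemma~\ref{lem2}, that each $g_i$ lies in the trilinear component $u\otimes v\otimes w$. A form that is linear in each of three mutually orthogonal blocks of coordinates contains no monomial in which a single variable occurs to a power exceeding one, so all its pure second derivatives vanish; thus $\Delta_\xi g_i=0$ for every $i$. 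Consequently $\Delta_\xi\theta_3=8\sum_{i=1}^q(\Delta_\xi g_i)\eta_i=0$, and substituting this into the displayed identity yields (\ref{lapl4}).

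I expect the main obstacle to lie entirely outside the corollary: the genuine difficulty is the structural description of $\theta_3$ carried out in Proposition~\ref{th:eta}, after which the corollary is a one-line substitution. The only point requiring care is a bookkeeping check, namely that the residual derivative surviving in Lemma~\ref{lem3} is exactly the quantity shown to vanish in Proposition~\ref{th:eta}. This compatibility is immediate once one notes that both statements are written in the common normalisation $\theta_3=8\sum_i g_i(\xi)\eta_i$, so that the term $\Delta_\xi\theta_3$ appearing in (\ref{lapl3}) is literally $8\sum_i(\Delta_\xi g_i)\eta_i$, which the proposition annihilates term by term.
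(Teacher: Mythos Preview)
Your argument is correct and matches the paper's intended one-line derivation: the corollary is obtained by substituting the vanishing of $\Delta_\xi\theta_3$ (established in the proof of Proposition~\ref{th:eta} via the trilinearity of each $g_i$) into the formula of Lemma~\ref{lem3}. You also correctly spotted and resolved the bookkeeping issue, namely that the residual term in (\ref{lapl3}) is $\Delta_\xi\theta_3$ and that this, rather than the trivially-zero $\Delta_\eta\theta_3$ appearing in the proposition's statement, is what the proof there actually kills.
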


\section{Proof of Theorem~\ref{th:polar}}
Suppose that $f$ is a non-primitive eikonal quartic. Then by Proposition~\ref{pr1}, $f\in E_{p,q}^{\nu}$ with $p\ge 1$ and $q\ge 2$.
Applying  (\ref{grad}) to (\ref{es3}), one obtains the following identity:
\begin{equation}\label{eta2}
\begin{split}
\frac{1}{64}|\nabla_\xi \theta_3|^2&=4\xi^2(\xi^\TOP A_\eta^2 \xi)+4\xi^\TOP A_\tau A_\eta^2 \xi
-3\eta^2(\xi^\TOP A_\tau \xi)-(\xi^\TOP A_\eta \xi)^2\\
&-4\sum_{i=1}^q(\xi^\TOP A_iA_\eta\xi)^2.
\end{split}
\end{equation}

 By Proposition~\ref{th:eta}, $\Delta_\xi\theta_3^2=2|\nabla_\xi\theta_3|^2,$ hence  (\ref{eta2}) implies
\begin{equation}\label{delta1}
\begin{split}
\frac{1}{128}\Delta_\eta \Delta_\xi\theta_3^2&=
8\xi^2 (\xi^\TOP B \xi) +8\xi^\TOP A_\tau B \xi
-6q(\xi^\TOP A_\tau \xi)\\
&-\Delta_\eta(\xi^\TOP A_\eta \xi)^2-4\Delta_\eta\sum_{i=1}^q(\xi^\TOP A_iA_\eta\xi)^2,
\end{split}
\end{equation}
where $B:=\sum_{i=1}^q A_i^2$. We have
$$
\Delta_\eta(\xi^\TOP A_\eta \xi)^2=2\sum_{i=1}^q (\xi^\TOP A_i \xi)^2=2\sum_{i=1}^q\tau_i^2\equiv 2\tau^2,
$$
and similarly
$$
\Delta_\eta\sum_{i=1}^q(\xi^\TOP A_iA_\eta\xi)^2
=2\sum_{i,j=1}^q (\xi^\TOP A_iA_j \xi)^2.
$$
Taking into account that $\xi^\TOP A_\tau \xi=\tau^2$, we get from (\ref{delta1})
\begin{equation}\label{summary}
\frac{1}{128}\Delta_\eta \Delta_\xi\theta_3^2=
8\xi^2 \cdot \xi^\TOP B \xi +8\xi^\TOP A_\tau B \xi-8\sum_{i,j=1}^q (\xi^\TOP A_iA_j \xi)^2
-(6q+2)\tau^2.
\end{equation}

On the other hand, $\Delta_\eta\theta_3=0$ because $\theta_3$ is linear in $\eta$, hence
\begin{equation}\label{summary1}
\frac{1}{128}\Delta_\eta \Delta_\xi\theta_3^2=
\frac{1}{128}\Delta_\xi \Delta_\eta\theta_3^2=\frac{1}{64}\Delta_\xi |\nabla_\eta\theta_3|^2.
\end{equation}
Applying (\ref{es1}) and (\ref{grad}), we find
\begin{equation}\label{th3eta}
|\nabla_\eta \theta_3|^2=16\xi^6-|\nabla_\xi \theta_4|^2=64(\xi^2 \cdot\xi^\TOP A_\tau\xi-\xi^\TOP A_\tau^2\xi)=
64(\xi^2 \tau^2-\xi^\TOP A_\tau^2\xi),
\end{equation}
where $\tau_i=\xi^\TOP A_i\xi$.

Our next step is the $\xi$-Laplacian of the right hand side of (\ref{th3eta}). We have $\nabla_\xi\tau_i=2A_i\xi$ and by Lemma~\ref{cor:A},  $\Delta_\xi\tau_i=2\trace A_i=0$. Therefore
$$
\Delta_\xi\tau^2=2\sum_{i=1}^q|\nabla_\xi\tau_i|^2=8\sum_{i=1}^q\xi^\TOP A_i^2\xi\equiv 8\xi^\TOP B\xi,
$$
and $\Delta_\xi (\xi^2 \tau^2)
=
(2p +16)\tau^2+8\xi^2(\xi^\TOP B\xi)
$.
Furthermore,
$$
\xi^\TOP A_\tau^2\xi=\sum_{i,j=1}^q \rho_{ij}\tau_i\tau_j, \qquad \rho_{ij}=\xi^\TOP A_iA_j\xi.
$$
We find $\nabla_\xi\rho_{ij}=(A_iA_j+A_jA_i)\xi$ and $\Delta_\xi\rho_{ij}=2\trace A_iA_j$, hence
\begin{equation*}
\begin{split}
\Delta_\xi(\xi^\TOP A_\tau^2\xi)&=
\sum_{i,j=1}^q\tau_i\tau_j\Delta_\xi\rho_{ij}+4\tau_i\scal{\nabla_\xi\rho_{ij}}{\nabla_\xi\tau_j}+
2\rho_{ij}\scal{\nabla_\xi\tau_{i}}{\nabla_\xi\tau_j}\\
&=2\trace A_\tau^2+8\sum_{i,j=1}^q \xi^\TOP A_j(A_i A_j+A_jA_i)\xi\,\tau_i+8\sum_{i,j=1}^q (\xi^\TOP A_iA_j \xi)^2.
\end{split}
\end{equation*}
By (iii) and (iv) in Lemma~\ref{lem2}, $A_j(A_i A_j+A_jA_i)=(1+2\delta_{ij})A_i-A_i A_j^2,$
where $\delta_{ij}$ is the Kronecker delta, hence
$$
\sum_{i,j=1}^q \xi^\TOP A_j(A_i A_j+A_jA_i)\xi\,\tau_i=\xi^{\TOP}(qA_\tau+2A_\tau-A_\tau B)\xi=(q+2)\tau^2-\xi^{\TOP}A_\tau B\xi.
$$
Since $\trace A_\tau^2=2\nu\tau^2$, we obtain
\begin{equation*}
\begin{split}
\Delta_\xi(\xi^\TOP A_\tau^2\xi)=(4\nu+8q+8)\tau^2-8\xi^{\TOP}A_\tau B\xi+8\sum_{i,j=1}^q (\xi^\TOP A_iA_j \xi)^2.
\end{split}
\end{equation*}
On substituting the found relations into (\ref{th3eta}), we find
\begin{equation*}
\frac{1}{64}\Delta_\xi|\nabla_\eta \theta_3|^2=8\xi^2(\xi^\TOP B\xi)+8\xi^{\TOP}A_\tau B\xi-8\sum_{i,j=1}^q (\xi^\TOP A_iA_j \xi)^2+2(p -4q-2\nu)\tau^2,
\end{equation*}
and on combining the last relation with (\ref{summary}) and (\ref{summary1}), we arrive at
\begin{equation}\label{Main}
(p +1-q-2\nu)\tau^2=0.
\end{equation}

Suppose that $\tau\equiv0$. Then $A_i=0$ for all $i=1,\ldots, q$. It follows from (\ref{eta2}) that $\theta_3\equiv 0$ and on applying (\ref{theta}),
$
\theta\equiv \theta_4+\theta_2+\theta_0=\xi^4-6\xi^2\eta^2+\eta^4.
$
This yields
$$
f=x_n^4+2x_n^2(\xi^2-3\eta^2)+\xi^4-6\xi^2\eta^2+\eta^4
=(x_n^2+\xi^2)^2-6\eta^2(x_n^2+\xi^2)+\eta^4\equiv h_{4,H},
$$
where $H$ is spanned on $x_n$ and $\xi$, hence $f$ is primitive eikonal quartic, a contradiction. Thus,  $\tau\not\equiv 0$ and (\ref{Main}) implies $2\nu=p+1-q$. On substituting this into (\ref{lapl4}), one finds
$$
\Delta_x f=4(p-3q+3)(x_n^2+\xi^2+\eta^2)=4(p-3q+3)x^2=8(\nu-q+1)x^2.
$$
In view of $n=2\nu+2q$ one finds by comparing with (\ref{Muntzer1}) that $m_1=q-1$ and $m_2=\nu$ are integers. It follows that $f$ is an isoparametric polynomial. The theorem is proved completely.

\section{Concluding remarks}
After this paper was finished, the author was informed by Professor Tang Zizhou that about the paper  \cite{wang}. In this paper, Q.M.~Wang proves that any smooth solution $f$ of the equation $|\overline{\nabla} f|^2=b(f)$  on $M=\R{n}$ and $M=\mathbb{S}^n$ has isoparametric fibration, i.e. must have only the level sets which are isoparametric on $M$ ($\overline{\nabla}$ denotes the covariant derivative on $M$), see also a recent preprint of R.~Miyaoka \cite{Miyaoka}. The Wang theorem clarify, to some content, the appearance and algebraic structure of the primitive eiconal polynomials $h_{g,H}$  in (\ref{virtue}) above. Indeed, any solution of the eiconal equation $|\nabla_x f|^2=k^2x^{2k-2}$ in $\R{n}$ induces (by the restriction) a transnormal function $F:=f|_{\mathbb{S}^{n-1}}$ on the unit sphere $\mathbb{S}^{n-1}\subset\R{n}$ satisfying the transnormal condition $|\overline{\nabla}F|^2=k^2(1-F^2)$. Thus, according to the Wang theorem, the level sets  $F=c$ are isoparametric submanifolds in $\mathbb{S}^{n-1}$, which agrees with the conclusion of our Theorem~\ref{th:polar} because the level set $h_{g,H}=\cos t$, $t\in \R{}$ splits into standard products of two spheres $\mathbb{S}^{p}(\cos \frac{t+2\pi m}{g})\times \mathbb{S}^{q}(\sin \frac{t+2\pi m}{g})$,  $p=\dim H-1$, $q=n-\dim H-1$, and $m=0,1,\ldots,g-1$. The latter tori are well known to be isoparametric hypersurfaces in  $\mathbb{S}^{n-1}$ with 2 distinct principal curvatures, see for instance \cite{Cecil}.

We  would like to thank Professor Tang Zizhou for many helpful suggestions, and, especially, for bringing to our attention the Wang paper \cite{wang} and also the papers \cite{GeTang}, \cite{Miyaoka}, \cite{Tang3}, \cite{Tang1}, \cite{Tang2} where related questions are treated.


%
\end{document}